\documentclass[11pt, a4paper]{article}

\usepackage[T1]{fontenc}
\usepackage[utf8]{inputenc} 
\usepackage{graphicx,color} 
\usepackage{epsfig} 
\usepackage{times} 
\usepackage{amsmath} 
\usepackage{amssymb}  
\usepackage{caption}
\usepackage{subcaption}
\usepackage{stackrel}

\usepackage{url}
\usepackage{multirow}
\usepackage{cite}

\usepackage{algorithm}
\usepackage{algpseudocode}

\makeatletter
\renewcommand{\Statex}{\item[]\hskip\ALG@thistlm}
\makeatother

\usepackage{pgf}
\usepackage{pgfplots}
\usepgfplotslibrary{fillbetween}
\usepackage{tikz}
\usetikzlibrary{arrows,automata}
\usetikzlibrary{intersections}
\usetikzlibrary{calc}
\usetikzlibrary{shapes}
\usetikzlibrary{positioning}
\usetikzlibrary{decorations.text}
\pgfdeclarelayer{background}
\pgfdeclarelayer{foreground}
\pgfsetlayers{background,main,foreground}
\usetikzlibrary{calc,patterns,decorations.pathmorphing,decorations.markings}

\definecolor{mycol}{RGB}{19,48,128}
\usepackage{hyperref} 
\hypersetup{
	colorlinks = true,
	citecolor = mycol,
	linkcolor = mycol,
	urlcolor = black
}

\newtheorem{theorem}{Theorem}[section]
\newtheorem{lemma}[theorem]{Lemma}

\newtheorem{assumption}[theorem]{Assumption}

\newtheorem{remark}[theorem]{Remark}

\newcommand{\R}{\mathbb{R}}

\newcommand{\bG}{G}
\newcommand{\bGs}{G_s}

\newcommand{\bQ}{Q}

\newcommand{\bW}{W}
\newcommand{\xstar}{x^\star}
\newcommand{\lstar}{\lambda^\star}
\newcommand{\nstar}{\nu^\star}
\newcommand{\T}{\top}

\newcommand{\calX}{\mathcal{X}}

\newcommand{\proof}{\noindent{\em Proof}.~}
\newcommand{\QED}{$\Box$}
\newcommand{\cvd}{\hfill\QED}

\algrenewcommand\algorithmicindent{1.2em}
\algrenewcommand\algorithmiccomment[1]{\hfill{\footnotesize[\textit{\textsf{#1}}]}}

\hypersetup{
  colorlinks=true,
  linkcolor=blue!60!black,
  citecolor=green!50!black,
  urlcolor=blue!70!black,
}

\begin{document}

\title{Towards A Goldfarb-Idnani Variant for Strongly Monotone Linear-Quadratic Games}

\author{Alberto Bemporad
\thanks{The author is with the IMT School for Advanced Studies Lucca, Italy (email: {\tt \footnotesize alberto.bemporad@imtlucca.it}).}%
}

\maketitle

\begin{abstract}
We analyze a simple variant of the Goldfarb-Idnani (GI) dual active-set method for computing variational generalized Nash equilibria of strongly monotone $N$-player games with convex quadratic costs and shared affine inequality and equality constraints. We show that several properties of the GI algorithm are maintained in spite of having a possibly non-symmetric pseudogradient matrix in the joint KKT system of the game, although convergence to an existing equilibrium is not guaranteed as in the original algorithm. Our numerical results show that the method is potentially competitive with alternative state-of-the-art algorithms, including for computing solutions of game-theoretic linear model predictive control laws.
\end{abstract}

\section{Introduction}
\label{sec:intro}
Linear-quadratic generalized Nash equilibrium problems (LQ-GNEPs)
arise when multiple decision makers optimize convex quadratic objectives under
local and shared affine constraints. This structure is central in
game-theoretic model predictive control (GT-MPC) of linear systems, where finite-horizon dynamic games must be solved repeatedly and reliably online in fast-sampling applications, such as unmanned aerial vehicles~\cite{DPM23}, autonomous driving~\cite{ABB26}, and demand-side energy management~\cite{HBLD22}.

In the presence of shared constraints, a common approach is to seek a 
variational generalized Nash equilibrium (v-GNE), in
which all players assign the same multiplier to the common constraints~\cite{rosen1965}.
Indeed, most of the existing algorithms for solving GNEPs are designed to compute v-GNEs, as
conditions of existence and uniqueness of solutions can be well characterized~\cite{FP03}.

Several numerical methods exist for solving LQ-GNEPs. Since, under inequality constraints, their variational equilibria can be characterized as  solutions of an affine variational inequality (AVI), or equivalently as a linear complementarity problem (LCP) with a possibly non-symmetric LCP matrix, pivotal methods have been proposed for AVI/LCP reformulations~\cite{CF96,SPS13,KHF18}. Interior-point methods are another important class of algorithms for solving classes of GNEPs~\cite{DFKS11}, and LQ-GNEPs in particular~\cite{LL25}. 

Splitting-based operator frameworks were also proposed to solve v-GNEs~\cite{YP19,BYGP22,BK21}, see also the recent package~\cite{MBCEDG25} collecting several of them; such methods typically have inexpensive iterations and favorable communication structure, but their convergence is asymptotic, which can be restrictive for embedded GT-MPC where highly accurate solutions are usually required at each sampling instant within short time intervals. The hybrid active-set method DR-DAQP was recently proposed in~\cite{ABB26} to effectively combine the robustness of Douglas-Rachford iterations with the finite termination properties of active-set methods. 

Augmented Lagrangian formulations were proposed in~\cite{KS16,
LSM22} and distributed algorithms in~\cite{YP19,TSN21,BYGP22,TK18}. A learning-based approach relying on best-response measurements was proposed in~\cite{FB25}. For parametric (possibly non-variational) LQ-GNEPs of moderate size, multiparametric programming was proposed recently in~\cite{HB26} to explicitly characterize the (possibly multiple) solutions as a function of the parameters. 

LQ-GNEPs can also be solved by reformulating the KKT conditions as a mixed-integer linear or quadratic program (MILP/MIQP)~\cite{Bem26},
which, although often numerically inefficient, enables finding non-variational GNEs and extracting
multiple equilibria. General-purpose numerical approaches that minimize the residuals of the KKT conditions~\cite{DFKS11,Bem26} can also be used, in principle,
to seek (variational) equilibria of LQ-GNEPs, although they are not guaranteed in general to converge to a GNE and are rather inefficient due to the use of (nonconvex) nonlinear programming. 

\subsection{Contribution}
Motivated by real-time GT-MPC applications, in this paper we analyze how to modify the Goldfarb-Idnani (GI) dual active-set
algorithm~\cite{GI83}, designed for symmetric positive-definite quadratic programming (QP) problems, to the non-symmetric, strongly monotone affine pseudogradient appearing in the VI/KKT characterization of a v-GNE. The resulting method, which we call GoldNash, enjoys high-accuracy and low-cost working-set update properties of GI, although it is not per se guaranteed to always find a solution if one exists. An experimental Python implementation of the method is available in the \texttt{NashOpt} library~\cite{Bem26}.

The rest of the paper is organized as follows. In Section~\ref{sec:problem} we formulate the problem of computing v-GNEs of LQ-GNEPs and in Section~\ref{sec:prelim} we analyze relevant properties of the associated pseudogradient matrix. In Section~\ref{sec:algorithm} we present our algorithm and in Section~\ref{sec:results} we report numerical results comparing it with alternative state-of-the-art algorithms for solving LQ-GNEPs, including those arising in GT-MPC problems. Finally, we draw some conclusions in Section~\ref{sec:conclusion}.

\section{Problem Formulation}
\label{sec:problem}

Consider an $N$-player non-cooperative generalized Nash equilibrium problem with 
decision vectors $x_i \in \R^{n_i}$, $i = 1, \ldots, N$, and denote
by $x = (x_1, \ldots, x_N) \in \R^n$, $n = \sum_{i=1}^N n_i$,
the joint strategy profile. Each player $i$ solves the following optimization problem:
\begin{equation}
    \begin{aligned}
    \min_{x_i} & \tfrac{1}{2} x^\T \bQ^{(i)} x + (c^{(i)})^\T x\\
    \textrm{s.t. }& Ax \leq b,\ Ex = f
    \end{aligned}
    \label{eq:QPi}
\end{equation}
where $\bQ^{(i)}\in \R^{n \times n}$, $c^{(i)} \in \R^n$, $A \in \R^{m \times n}$, $b \in \R^m$, $E \in \R^{q \times n}$, $f \in \R^q$ ($m,q\geq 0$).
Without loss of generality, we assume $\bQ^{(i)}=(\bQ^{(i)})^\T$. We also assume that each QP in~\eqref{eq:QPi} is strictly convex, i.e., $\bQ^{(i)}_{ii} \succ 0$, $\forall i = 1, \ldots, N$,
where $\bQ^{(i)}_{ii}$ denotes the $i$-th diagonal block of $\bQ^{(i)}$.
We consider the constraint set $\calX=\{x \in \R^n : Ax \leq b,\; Ex = f\}$ as \emph{shared}, i.e., it is common to all players. Possible local inequality constraints only involving $x_i$, such as lower and upper bounds, are assumed to be incorporated into $Ax \leq b$, and local equality constraints into $Ex = f$.

By stacking the gradients $\bQ^{(i)}_{i, :}x + c^{(i)}_{i}$ of each agent's cost with respect to its own decision variable, we can define the \emph{pseudogradient} $F: \R^n \to \R^n$ of the game 
\[
  F(x) = \bG x + g
\]
where $\bG \in \R^{n \times n}$, $g \in \R^n$, and
\begin{equation}\label{eq:Gg_assembly}
  \bG_{i, :} = \bQ^{(i)}_{i,:}, \qquad
  g_{i}   = c^{(i)}_{i}, \qquad i = 1,\ldots,N.
\end{equation}

A vector $\xstar \in \calX$ is a \emph{generalized Nash equilibrium} (GNE) if each vector $\xstar_i$ solves~\eqref{eq:QPi} for $x_{-i} = \xstar_{-i}$, for all $i = 1, \ldots, N$, where $x_{-i}$ denotes all the components of $x$ except those in $x_i$. Under the
additional condition that the vector $\lstar \in \R^m$ of Lagrange multipliers associated with the shared constraints is the same for all players, such an equilibrium is called a \emph{variational GNE} (v-GNE) and can be characterized by the joint Karush-Kuhn-Tucker (KKT) conditions~\cite[Theorem 12.1]{NW06} of the $N$ optimization problems~\eqref{eq:QPi}: 
\begin{subequations}
\begin{align}
  \bG \xstar + g + A^\T \lstar + E^\T \nstar &= 0 \label{eq:KKT_stat}\\
  E \xstar &= f                \label{eq:KKT_eq}\\
  A \xstar &\leq b              \label{eq:KKT_prim}\\
  \lstar   &\ge 0              \label{eq:KKT_dual}\\
  (\lstar)^\T (A \xstar - b)   &= 0 \label{eq:KKT_comp}
\end{align}
\label{eq:KKT}%
\end{subequations}
where $\nstar \in \R^q$ is the shared dual vector for the equality constraints and is unrestricted in sign.
Note that local constraints only involving $x_i$ do not contribute to the optimality conditions of each other player $j\neq i$, as the corresponding rows of $A$ (resp.\ $E$) are zero in the columns corresponding to $x_j$. 

\section{Preliminaries}
\label{sec:prelim}
\subsection{Reformulations as Linear Complementarity Problems}
\label{sec:lcp_reformulation}
The KKT conditions~\eqref{eq:KKT} can be reformulated as linear complementarity problems (LCPs) and solved by the classical pivotal algorithm of Lemke~\cite{Lem65,CPS09}. We distinguish two variants of the reformulation, which we refer to as the \emph{primal} and \emph{dual} Lemke approaches, described below.
\subsubsection{Primal Lemke} 
When no equality constraints are present and $x$ has finite lower bounds, $x\geq \ell$, by setting $y=x-\ell$, $s=b-Ay-A\ell$, and calling $\mu$ the Lagrange multipliers of the lower-bound constraints, the KKT conditions~\eqref{eq:KKT} can be rearranged to give the following linear complementarity problem (LCP):
\[
    0 \;\leq\;
    \begin{bmatrix} y \\ \lambda \end{bmatrix}
    \;\perp\;
    \underbrace{\begin{bmatrix} \bG & A^\T \\ -A & 0 \end{bmatrix}}_{M}
    \begin{bmatrix} y \\ \lambda \end{bmatrix}
    +
    \underbrace{\begin{bmatrix} \bG\ell + g \\ b - A\ell \end{bmatrix}}_{q}
    =
    \begin{bmatrix} \mu \\ s \end{bmatrix}
    \;\ge\; 0.
\]
This LCP can be solved by Lemke's algorithm~\cite{Lem65,CPS09}.
\subsubsection{Dual Lemke}
\label{sec:lcp_reformulation-dual}
In the absence of equality constraints, since $\bG$ is invertible
by Lemma~\ref{lem:Ginv_pd}, we can replace $x= -\bG^{-1}(g + A^\T \lambda)$ in the KKT conditions~\eqref{eq:KKT} and get the following LCP
\[
    \begin{aligned}
    0\leq \lambda \perp \underbrace{A\bG^{-1} A^\T}_{M}\lambda + \underbrace{b + A\bG^{-1}g}_{q} \geq 0.
    \end{aligned}
\]
In the presence of equality constraints, the same approach can be applied by first eliminating both $x$ and $\nu$ from~\eqref{eq:KKT_stat}--\eqref{eq:KKT_eq}
\begin{equation}
    \begin{bmatrix}x\\\nu\end{bmatrix} = -\begin{bmatrix} \bG & E^\T \\ E & 0 \end{bmatrix}^{-1} \begin{bmatrix} g + A^\T \lambda \\ f \end{bmatrix}
\label{eq:dual_lemke_LCP}
\end{equation}
and then substituting the expression for $x$ as a function of $\lambda$ into the primal-feasibility~\eqref{eq:KKT_prim} and complementarity conditions~\eqref{eq:KKT_comp} to get another LCP involving only $\lambda$.
\subsection{Properties of the pseudogradient matrix}
In general, the pseudogradient matrix $\bG$ is not symmetric, which makes the KKT conditions above not equivalent to those of a standard QP. We denote by $\bGs = \tfrac{1}{2}(\bG + \bG^\T)$ the \emph{symmetric part} of $\bG$. In the special case $\bG = \bGs$, the game is a potential game, whose variational GNEs are indeed the solutions of a standard QP. 

To adapt the GI algorithm~\cite{GI83} to attempt solving the v-GNE problem we posed, we need to verify certain properties of the pseudogradient matrix $\bG$, due to its possible asymmetry. In particular, we want to show that the add/drop logic of~\cite{GI83} does not depend on the symmetry of the pseudogradient matrix.

\begin{assumption}[Strong monotonicity]\label{ass:strong_mono}
The symmetric part of the pseudogradient matrix satisfies $\bGs=\bGs^\T \succ 0$.
\end{assumption}

\begin{assumption}
\label{ass:calX}
The shared constraint set $\calX = \{x \in \R^n : Ax \leq b,\; Ex = f\}$ is non-empty.
\end{assumption}
Under Assumptions~\ref{ass:calX} and~\ref{ass:strong_mono}, a v-GNE solution
$\xstar$ exists and is unique~\cite[Thm.~2.3.3]{FP03}.
Without loss of generality, we also assume $\operatorname{rank}(E) = q$. In fact, in the case $E$ has linearly dependent rows, one can perform a QR decomposition with pivoting $\Pi [E\ -f] = Q \left[\begin{smallmatrix} R_{11} & R_{12} \\ 0 & R_{22} \end{smallmatrix}\right]$, where $R_{22}=0$ is guaranteed as $\{x:\ Ex=f\} \neq \emptyset$ by Assumption~\ref{ass:calX}, and replace
the equality constraints by $[R_{11}\ R_{12}]\left[\begin{smallmatrix} x' \\ 1 \end{smallmatrix}\right]=0$.

\begin{lemma}
Under Assumption~\ref{ass:strong_mono}, the inverse $\bG^{-1}$ of the pseudogradient matrix $\bG$ exists and $x^\T\bG^{-1}x>0$ for all $x\neq0$.
\label{lem:Ginv_pd}
\end{lemma}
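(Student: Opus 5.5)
The plan is to use only the quadratic form of $\bG$. The key observation is that for any $z\in\R^n$ the antisymmetric part $\tfrac12(\bG-\bG^\T)$ contributes nothing to $z^\T\bG z$. Hence
\[
z^\T \bG z = z^\T \bGs z,
\]
and by Assumption~\ref{ass:strong_mono} this is strictly positive for every $z\neq 0$.

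\emph{Invertibility.} Suppose $\bG z=0$ for some $z$. Then $0=z^\T\bG z=z^\T\bGs z$. Since $\bGs\succ0$, this forces $z=0$. So $\bG$ has trivial kernel and, being square, is invertible. Quantitatively, $\|\bG z\|\,\|z\|\ge z^\T\bG z\ge \lambda_{\min}(\bGs)\|z\|^2$, although the qualitative statement is all that is needed.

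\emph{Positivity of $\bG^{-1}$.} Take $x\neq0$ and set $z=\bG^{-1}x$. Then $z\neq0$, since otherwise $x=\bG z=0$, and $x=\bG z$. Therefore
\[
x^\T\bG^{-1}x = (\bG z)^\T z = z^\T\bG^\T z = z^\T\bG z = z^\T\bGs z>0,
\]
using that a scalar equals its transpose. Equivalently, the symmetric part of $\bG^{-1}$ is $\bG^{-\T}\bGs\bG^{-1}$, which is a congruence of $\bGs\succ0$ by an invertible matrix and hence positive definite.

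There is no real obstacle here. The only point needing care is not to assume symmetry: the argument must go through the identity $z^\T\bG z=z^\T\bGs z$, not through eigenvalues of $\bG$, which may be complex.
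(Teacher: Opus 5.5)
Your proof is correct and takes essentially the same route as the paper: you obtain a trivial kernel from $z^\T\bG z = z^\T\bGs z$, and then substitute $z=\bG^{-1}x$ to get $x^\T\bG^{-1}x = z^\T\bG^\T z = z^\T\bGs z>0$. Your side remark that the symmetric part of $\bG^{-1}$ equals $\bG^{-\T}\bGs\bG^{-1}$ is also correct, and it is a compact congruence restatement of the same fact.
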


\proof
Suppose by contradiction that $\bG$ is not full rank, i.e., $\bG x = 0$ for some $x \neq 0$.
Then $0=x^\T \bG x = x^\T \bGs x$, contradicting $\bGs \succ 0$, and hence $\bG^{-1}$ exists.
Moreover, consider any vector $x\neq 0$ and let $y = \bG^{-1}x$. Then,
$x^\T \bG^{-1} x = (\bG y)^\T \bG^{-1} (\bG y) = y^\T \bG^\T y = y^\T\bGs y>0$, as $y\neq 0$ and 
$\bGs \succ 0$.
\cvd

\section{GI Algorithm}
\label{sec:algorithm}

\begin{algorithm}[h!]
\caption{GoldNash: A v-GNE variant of the dual active-set solver by Goldfarb-Idnani~\cite{GI83}}
\label{alg:goldnash}
\begin{algorithmic}[1]
\Require $\{\bQ^{(i)},c^{(i)}\}_{i=1}^N$;\; $A\in\R^{m\times n}$, $b\in\R^m$;\;
         $E\in\R^{q\times n}$, $f\in\R^q$;\; $\varepsilon>0$; $M>0$.
\Ensure $\xstar\in\R^n$, $\lstar\in\R^m_{\ge 0}$, $\nstar\in\R^q$, \texttt{status}.
\State Assemble $\bG$, $g$ via~\eqref{eq:Gg_assembly}; compute LU factorization $P\bG=LU$\label{line:lu}.
\State Precompute $Y_A\leftarrow\bG^{-1}A^\T$ and $Y_E\leftarrow\bG^{-1}E^\T$.\label{line:lu2}
\State $x\leftarrow{-\bG^{-1}g}$;\;
       $\nu\leftarrow(EY_E)^{-1}(Ex-f)$;\;
       $x\leftarrow x-Y_E\nu$;\;
       $\lambda\leftarrow 0_m$;\;
       $\bW\leftarrow\emptyset$;\;
       $j\leftarrow 0$.\label{line:init}
\Repeat \Comment{Outer loop}
  \State $p\leftarrow\arg\max_k(Ax-b)_k$;\; $\rho_p\leftarrow(Ax-b)_p$.
  \If{$\rho_p\le\varepsilon$} \Return $(x,\lambda,\nu,\texttt{optimal})$. \EndIf
  \State $\mu_p\leftarrow 0$;\; $j\leftarrow j+1$.
  \Repeat \Comment{Inner loop: target constraint $p$}
    \State $y_p\leftarrow[Y_A]_{:,p}$;\;
           $Y_{\bar\bW}\leftarrow\bigl[[Y_A]_{:,\bW},\,Y_E\bigr]$;\;
           $\bar{A}_\bW\leftarrow\bigl[\begin{smallmatrix}A_\bW\\E\end{smallmatrix}\bigr]$.
    \State $\bar{S}_\bW\leftarrow\bar{A}_\bW Y_{\bar\bW}$;\;
           $\bar{r}_\bW=(r_\bW,s)\leftarrow\bar{S}_\bW^{-1}(\bar{A}_\bW\,y_p)$;\label{line:dual_step}
    \Statex $z\leftarrow Y_{\bar\bW}\bar{r}_\bW-y_p$;\; $j\leftarrow j+1$.
    \If{$a_p^\T z<-\varepsilon$}\label{line:t2-if}
      \State $t_2\leftarrow-(a_p^\T x-b_p)\,/\,(a_p^\T z)$.
    \Else
      \State $t_2\leftarrow+\infty$.
    \EndIf\label{line:t2-end}
    \State $t_1\leftarrow\min_{k\in\bW,\,r_k>0}\lambda_k/r_k$
           \quad ($+\infty$ if $r_k\leq 0$ $\forall k\in\bW$).
    \If{$t_1=t_2=+\infty$} \Return $(\cdot,\cdot,\cdot,\texttt{infeasible})$. \EndIf
    \State $t\leftarrow\min(t_1,t_2)$;\;
           $x\leftarrow x+tz$;\;
           $\lambda_\bW\leftarrow\lambda_\bW-tr_\bW$;\;
    \Statex $\nu\leftarrow\nu-ts$;\;
           $\mu_p\leftarrow\mu_p+t$;\;
           $\lambda_p\leftarrow\mu_p$.
    \If{$t_2\leq t_1$}
      \State $\bW\leftarrow\bW\cup\{p\}$;\; \textbf{break}.\label{line:add} \Comment{Add $p$ to $\bW$}
    \Else
      \State $j^\star\leftarrow\arg\min_{k\in\bW,\,r_k>0}\lambda_k/r_k$;\;
      \Statex $\lambda_{j^\star}\leftarrow 0$;\;
             $\bW\leftarrow\bW\setminus\{j^\star\}$. \Comment{Drop $j^\star$ from $\bW$}
    \EndIf
  \Until{inner loop exits or $j\geq M$}
\Until{outer loop exits or $j\geq M$}
\State \Return $(\cdot,\cdot,\cdot,\texttt{unsolved})$.
\end{algorithmic}
\end{algorithm}

Our adaptation of the GI algorithm~\cite{GI83} to attempt solving the v-GNE problem
is summarized in Algorithm~\ref{alg:goldnash} for the convenience of the reader. The algorithm maintains a working set $\bW \subseteq \{1, \ldots, m\}$ of
indices of \emph{inequality} constraints treated as active at the equilibrium, 
and a \emph{tentative} multiplier
$\mu_p$ for the currently targeted violated inequality constraint $p \notin \bW$.
Equality constraints are always active and kept separate. The current iterate
$(x, \nu, \lambda_\bW, \mu_p)$ satisfies the stationarity invariance condition
\begin{subequations}
\begin{equation}\label{eq:invariant}
  \bG x + g + E^\T \nu + \sum_{k \in \bW} \lambda_k a_k + \mu_p\, a_p = 0
\end{equation}
as required by~\eqref{eq:KKT_stat}, where $a_k^\T$ denotes the $k$-th row of $A$,
and
\begin{eqnarray}
 A_\bW x &=& b_\bW \label{eq:Aw-tight} \\
 E x &=& f. \label{eq:E-tight}
\end{eqnarray}
\label{eq:invariant-conditions}%
\end{subequations}
The multipliers $\lambda_k$ for $k \in \bW$ are restricted to stay nonnegative, $\lambda_k \geq 0$.
The algorithm is allowed to run for at most $M$ iterations.

The algorithm starts by solving the equality-constrained Nash equilibrium with respect to $x$ and $\nu$:
\begin{equation}\label{eq:init}
  \begin{bmatrix} \bG & E^\T \\ E & 0 \end{bmatrix}
  \begin{bmatrix} x \\ \nu \end{bmatrix}
  = \begin{bmatrix} -g \\ f \end{bmatrix}
\end{equation}
by block elimination (Step~\ref{line:init}), i.e., by substituting
$x = -G^{-1}(g + E^\top\nu)$ in $Ex = f$ to evaluate $\nu$ from
$(EG^{-1}E^\top)\nu = -EG^{-1}g-f$ and then recovering $x$.
Note that~\eqref{eq:init} has a unique solution by Lemma~\ref{lem:Ginv_pd} and the assumed full row-rank of $E$.
The initialization sets $\bW = \emptyset$ and $\mu_p = 0$, so that~\eqref{eq:invariant} is satisfied.

Consider now a generic step of the algorithm, where the current working set $\bW$ is fixed and a new violated inequality constraint $p \notin \bW$ is targeted for addition. The algorithm performs the following updates:
\begin{equation}
\begin{aligned}
x &\leftarrow x + tz \\
\lambda_\bW &\leftarrow \lambda_\bW - t\,r_\bW \\
\nu &\leftarrow \nu - t\,s \\
\mu_p &\leftarrow \mu_p + t
\end{aligned}
\label{eq:update}
\end{equation}
for a properly chosen primal direction $z \in \R^n$, dual direction $r_\bW \in \R^{|\bW|}$ for the active inequality multipliers, dual direction $s \in \R^q$ for the equality multipliers, and positive step-size 
$t\in\R_{>0}\cup\{+\infty\}$. These are determined by the inner loop such that the conditions~\eqref{eq:invariant-conditions} are preserved after the update:
\begin{subequations}
\begin{equation}
  \bG (x+tz) + g + E^\T\!(\nu - ts) +\! \sum_{k \in \bW} (\lambda_k - t r_k) a_k + (\mu_p+t) a_p = 0
\label{eq:stationarity-after}
\end{equation}
\begin{eqnarray}
\nonumber\\[-2em]
 A_\bW(x+tz) &=& b_\bW \label{eq:Aw-tight-t} \\
 E(x+tz) &=& f. \label{eq:E-tight-t}
\end{eqnarray}
\end{subequations}
Since $t>0$ and, before the update, $x$ satisfies~\eqref{eq:Aw-tight-t}--\eqref{eq:E-tight-t},
these imply that
\begin{equation}
  \bar{A}_\bW z = 0,\quad \bar{A}_\bW = \begin{bmatrix} A_\bW \\ E \end{bmatrix}.
  \label{eq:Aw-tight-z}
\end{equation}
By using~\eqref{eq:invariant}, the condition in~\eqref{eq:stationarity-after} simplifies to
$t(\bG z - \bar{A}_\bW^\T \bar{r}_\bW + a_p) = 0$ or equivalently, since $t>0$, to
\begin{subequations}\label{eq:step_system}
\begin{equation}
  z = \bG^{-1}(\bar{A}_\bW^\T \bar{r}_\bW - a_p).
\label{eq:step_G}
\end{equation}
Substituting~\eqref{eq:step_G} into~\eqref{eq:Aw-tight-z} gives
$\bar{A}_\bW \bG^{-1}(\bar{A}_\bW^\T \bar{r}_\bW - a_p) = 0$,
and finally the dual direction
\begin{equation}\label{eq:dual_step}
  \bar{r}_\bW = \bar{S}_\bW^{-1}(\bar{A}_\bW \bG^{-1} a_p)
\end{equation}
\end{subequations}
where $\bar{S}_\bW = \bar{A}_\bW\,\bG^{-1}\bar{A}_\bW^\T$ is invertible, as shown
by the following lemma.

\begin{lemma}\label{lem:schur}
Let $\bW \subseteq \{1, \ldots, m\}$ be any index set,
let $A_\bW \in \R^{|\bW| \times n}$ be the rows of $A$ indexed by $\bW$,
and let $\bar{A}_\bW$ be defined as in~\eqref{eq:Aw-tight-z}.
If $\bar{A}_\bW$ has full row rank and Assumption~\ref{ass:strong_mono} holds, then
\begin{equation}\label{eq:schur}
  \bar{S}_\bW = \bar{A}_\bW\, \bG^{-1} \bar{A}_\bW^\T \;\in\; \R^{(|\bW|+q) \times (|\bW|+q)}
\end{equation}
is invertible.
\end{lemma}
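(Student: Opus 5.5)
The plan is to show that $\bar{S}_\bW$ has a positive definite symmetric part, since a square matrix with that property is nonsingular. The argument rests on Lemma~\ref{lem:Ginv_pd}, which states that $u^\T \bG^{-1} u > 0$ for every $u \neq 0$, even though $\bG^{-1}$ is in general not symmetric.

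First I argue by contradiction. Suppose $\bar{S}_\bW$ is singular. Then there is a vector $v \in \R^{|\bW|+q}$, $v \neq 0$, with $\bar{S}_\bW v = 0$. Multiplying on the left by $v^\T$ gives
\[
0 = v^\T \bar{A}_\bW \bG^{-1} \bar{A}_\bW^\T v = u^\T \bG^{-1} u, \qquad u := \bar{A}_\bW^\T v .
\]
Because $\bar{A}_\bW$ has full row rank, its transpose has trivial kernel, so $u \neq 0$. Lemma~\ref{lem:Ginv_pd} then gives $u^\T \bG^{-1} u > 0$, which is a contradiction. Hence $\bar{S}_\bW$ is invertible.

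The same computation shows slightly more: $v^\T \bar{S}_\bW v > 0$ for all $v \neq 0$, so the symmetric part of $\bar{S}_\bW$ is positive definite. This may be worth stating explicitly, since later sign arguments in the add/drop logic could use it.

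I do not expect a real obstacle. The only point needing care is that $\bG^{-1}$ is not symmetric, so one cannot appeal to the usual result that a congruence of a symmetric positive definite matrix is positive definite. The quadratic-form argument avoids this, because it uses only the positivity of $u^\T \bG^{-1} u$ from Lemma~\ref{lem:Ginv_pd}, which holds regardless of symmetry.
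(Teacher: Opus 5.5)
Your proof is correct and is essentially the paper's argument: a nonzero kernel vector $v$ of $\bar{S}_\bW$ gives $u=\bar{A}_\bW^\T v\neq 0$ (by full row rank) with $u^\T\bG^{-1}u=v^\T\bar{S}_\bW v=0$, which contradicts Lemma~\ref{lem:Ginv_pd}. Your extra remark, that $v^\T\bar{S}_\bW v>0$ for all $v\neq 0$ so the symmetric part of $\bar{S}_\bW$ is positive definite, follows from the same computation and is also valid.
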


\proof
Suppose $\bar{S}_\bW v = 0$ for some $v \in \R^{|\bW|+q}$, $v\neq 0$, and let $z=\bar{A}_\bW^\T v$.
Clearly, $z\neq 0$ since the rows of $\bar{A}_\bW$ are linearly independent.
Then $0 = v^\T \bar{S}_\bW v = (\bar{A}_\bW^\T v)^\T \bG^{-1} (\bar{A}_\bW^\T v)=z^\T \bG^{-1} z > 0$
by Lemma~\ref{lem:Ginv_pd}, a contradiction. Therefore, $v = 0$ and $\ker(\bar{S}_\bW) = \{0\}$, so $\bar{S}_\bW$ is invertible.
\cvd

Note that
$\bar{S}_\bW$ is not symmetric in general, since $\bar{S}_\bW^\T = \bar{A}_\bW\bG^{-\T}\bar{A}_\bW^\T
$ and $\bG$ may not be symmetric.
In the original GI algorithm, $G=G^\top\succ0$ implies
$G^{-1}=G^{-\top}\succ0$, hence $S_\bW=A_\bW G^{-1}A_\bW^\top$
is symmetric positive definite whenever $A_\bW$ has full row rank.

The second symmetry-dependent fact in the GI proof is the property
$a_p^\T z<0$ for each nonzero primal step $z$.  The following lemma shows that
the same property holds in the v-GNE setting.

\begin{lemma}\label{lem:descent}
Let $\bar{A}_\bW$ be full row rank, let $p\notin\bW$, assume $a_p\neq0$
and let $z$ satisfy $\bar{A}_\bW z = 0$ as in~\eqref{eq:Aw-tight-z}.
The following are equivalent: \textup{($i$)}~$a_p \notin \operatorname{rowspan}(\bar{A}_\bW)$; \textup{($ii$)}~$z \ne 0$; \textup{($iii$)}~$a_p^\T z < 0$.
\end{lemma}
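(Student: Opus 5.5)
Here $z$ is the primal step produced by~\eqref{eq:step_G}--\eqref{eq:dual_step}, i.e., $z=\bG^{-1}(\bar{A}_\bW^\T \bar{r}_\bW - a_p)$ with $\bar{r}_\bW=\bar{S}_\bW^{-1}\bar{A}_\bW\bG^{-1}a_p$, which is well defined by Lemma~\ref{lem:schur}; by construction $\bar{A}_\bW z=0$. The plan is to compute $a_p^\T z$ directly and rewrite it as a quadratic form in $\bG^{-1}$ (equivalently, in $\bGs$ after a change of variables), then prove the cycle of implications ($i$)$\Rightarrow$($iii$)$\Rightarrow$($ii$)$\Rightarrow$($i$).

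\textbf{Key identity.} Set $w=\bar{A}_\bW^\T \bar{r}_\bW - a_p$, so that $z=\bG^{-1}w$. Since $\bar{A}_\bW z=0$, we have $\bar{r}_\bW^\T\bar{A}_\bW z=0$, hence
\[
a_p^\T z = -(\bar{A}_\bW^\T\bar{r}_\bW - a_p)^\T z + \bar{r}_\bW^\T \bar{A}_\bW z = -w^\T \bG^{-1} w .
\]
This is the step that replaces the symmetric-case argument. In GI one uses the orthogonality $\bar{A}_\bW z=0$ to cancel the cross term, and that cancellation needs no symmetry of $\bG$, because only $z=\bG^{-1}w$ appears, never $\bG^{-\T}$. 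By Lemma~\ref{lem:Ginv_pd}, $w^\T\bG^{-1}w>0$ whenever $w\neq0$. Equivalently, $a_p^\T z=-z^\T\bG^\T z=-z^\T\bGs z$.

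\textbf{Equivalences.} For ($i$)$\Rightarrow$($iii$): if $a_p\notin\operatorname{rowspan}(\bar{A}_\bW)$, then $w\neq0$ for every choice of $\bar{r}_\bW$, so $a_p^\T z=-w^\T\bG^{-1}w<0$. ($iii$)$\Rightarrow$($ii$) is trivial. For ($ii$)$\Rightarrow$($i$), argue by contraposition: suppose $a_p=\bar{A}_\bW^\T v$. Then $\bar{A}_\bW\bG^{-1}a_p=\bar{S}_\bW v$, so $\bar{r}_\bW=v$ by invertibility of $\bar{S}_\bW$ (Lemma~\ref{lem:schur}). Hence $w=0$ and $z=0$. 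Alternatively, $w\in$ rowspan means $\bG z\in\operatorname{range}(\bar{A}_\bW^\T)$ while $z\in\ker\bar{A}_\bW$, giving $z^\T\bGs z=z^\T\bG z=0$ and therefore $z=0$.

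\textbf{Main obstacle.} The delicate point is making sure the hypothesis ``$z$ satisfies $\bar{A}_\bW z=0$'' is read as $z$ being the specific step~\eqref{eq:step_G}, not an arbitrary kernel vector; for an arbitrary kernel vector ($iii$) is false. I would state this interpretation explicitly at the start. The remaining care is that $a_p\neq0$ is only needed to exclude a trivial case, and that the quadratic-form positivity must come from Lemma~\ref{lem:Ginv_pd} rather than from symmetry.
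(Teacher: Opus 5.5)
Your proof is correct and is essentially the paper's argument. Your identity $a_p^\T z=-w^\T\bG^{-1}w=-z^\T\bGs z$ is exactly what the paper obtains by premultiplying~\eqref{eq:step_G} by $z^\T\bG$ and using $\bar{A}_\bW z=0$, and your alternative argument for ($ii$)$\Rightarrow$($i$) is the paper's sign contradiction. The differences are only organizational: you close the cycle ($i$)$\Rightarrow$($iii$)$\Rightarrow$($ii$)$\Rightarrow$($i$), arguing via $w\neq0$ rather than $z\neq0$ (the same thing, since $w=\bG z$), and your main contrapositive uses the invertibility of $\bar{S}_\bW$ (Lemma~\ref{lem:schur}) to get $\bar{r}_\bW=v$ and hence $z=0$ directly.
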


\proof
Clearly \textup{($iii$)}$\Rightarrow$\textup{($ii$)}.
Pre-multiplying~\eqref{eq:step_G} by $z^\T\bG$ and using $\bar{A}_\bW z = 0$, we get 
$a_p^\T z = -z^\T \bG z = -z^\T \bGs z \leq 0$, 
with equality iff $z = 0$, establishing \textup{($ii$)}$\Rightarrow$\textup{($iii$)}.
The implication \textup{($i$)}$\Rightarrow$\textup{($ii$)} holds as, 
if $z$ were $0$, then~\eqref{eq:step_G} would give
$a_p = \bar{A}_\bW^\T \bar{r}_\bW \in \operatorname{rowspan}(\bar{A}_\bW)$,
contradicting \textup{($i$)}.
To prove that \textup{($ii$)}$\Rightarrow$\textup{($i$)}, let $z\neq 0$
and assume by contradiction that
$a_p \in \operatorname{rowspan}(\bar{A}_\bW)$, i.e., that
$a_p = \bar{A}_\bW^\T v$; then, since $\bGs \succ 0$, we get 
$0>-z^\T \bGs z = a_p^\T z = v^\T \bar{A}_\bW z = 0$, which is not possible. In conclusion, we proved that \textup{($ii$)}$\Leftrightarrow$\textup{($iii$)}, \textup{($i$)}$\Leftrightarrow$\textup{($ii$)}, and, therefore, also that \textup{($i$)}$\Leftrightarrow$\textup{($iii$)}.
\cvd

The fact that Algorithm~\ref{alg:goldnash} maintains $\bar{A}_\bW$ full row rank is a property
inherited directly from the original GI algorithm, based on the following inductive argument
on the number of iterations.
At initialization $\bW=\emptyset$, so there is simply no linear dependence of rows in $\bar{A}_\bW$ as there are no rows. A constraint $p$ is added to $\bW$ (line~\ref{line:add}) only when $t_2\leq t_1$, which requires $t_2<+\infty$, hence $a_p^\T z<0$ (see lines~\ref{line:t2-if}--\ref{line:t2-end}); by Lemma~\ref{lem:descent} this implies
$a_p\notin\operatorname{rowspan}(\bar{A}_\bW)$, so the new row $a_p^\T$ is linearly independent
of all current rows of $\bar{A}_\bW$ and full row-rank is preserved. Removing a row from a full row rank matrix when dropping a constraint leaves a matrix that is still full row rank.

\begin{remark}
The original GI algorithm relies on $G=G^\T\succ 0$ to compute its Cholesky factorization $G=LL^\T$, and to use $L$ to compute quantities involving $G^{-1}$. In our case, we cannot compute a Cholesky factorization
due to the possible asymmetry of $\bG$, so we compute an LU factorization $P\bG=LU$ (Step~\ref{line:lu})
and use it to solve linear systems (Steps~\ref{line:lu2} and~\ref{line:init}) involving $\bG^{-1}$ via forward and backward substitution. The algorithm also requires the ability to solve the linear systems involving 
the possibly non-symmetric matrices $EY_E$ at initialization (Step~\ref{line:init}) and, during the iterations (Step~\ref{line:dual_step}), $\bar{S}_\bW$. For numerical efficiency, an LU decomposition of the latter can be maintained and updated incrementally during the iterations as the working set $\bW$ changes. 
\end{remark}

\section{Numerical Results}
\label{sec:results}
We compare the performance of Algorithm~\ref{alg:goldnash} (labeled as \texttt{g*nash}) against alternative methods for computing v-GNEs of strongly monotone LQ games: the proximal ADMM method of~\cite{BK21} (\texttt{admm}) extended to handle inequalities as described in~\cite[Section 4.2.1]{Bem26}; the log-barrier interior-point method of~\cite{LL25} (\texttt{ipm}); the DR-DAQP method of~\cite{ABB26} (\texttt{daqp}, new version 0.8.7); and Lemke's algorithm applied to the primal (\texttt{lemke}) and dual (\texttt{lemke$_{\texttt D}$}) LCP reformulations described in Section~\ref{sec:lcp_reformulation}. All these methods,
including a prototype Python implementation of Algorithm~\ref{alg:goldnash}, are implemented in or interfaced with the \texttt{NashOpt} library and called with default options. We do not include MILP-based methods to solve the KKT system~\eqref{eq:KKT} as described in~\cite{Bem26}, as they are mainly conceived to solve {\it non-variational} and possibly non-monotone LQ-GNEPs, nor KKT-residual methods, as they would be inefficient for the considered class of GNEPs.

\subsection{Randomly Generated LQ-GNEs}
Tables~\ref{tab:results}--\ref{tab:results_eq} report the mean CPU time over 100 random instances of v-GNE LQ games with up to $N=100$ agents, $n=5$ variables per agent, $m=2Nn$ coupling inequalities, and either no equalities or $q=\lfloor N/2 \rfloor$ equality constraints. Each problem instance is generated as follows: set tentative agents' QP Hessian matrices
$\tilde Q^{(i)} = B_i^\T B_i$, where each entry of $B_i$ is sampled i.i.d.\ from the standard normal distribution $\mathcal{N}(0,1)$, compute the resulting pseudogradient matrix $\tilde\bG$, and then set $Q^{(i)}=\tilde Q^{(i)} + (\max(-\lambda_{\rm min}(\tilde\bG),0)+10^{-4})I$ to ensure strong monotonicity.
Each entry of the linear terms $c^{(i)}$ is sampled from $\mathcal{N}(0,5^2)$, each upper (lower) bound is generated  uniformly from $[0.1,1]$ (resp.\ $[-1,-0.1]$). The coupling constraints are generated as follows: each entry of $A$ and $E$ is sampled from $\mathcal{N}(0,1)$, a feasible vector $x_0$ is sampled uniformly within the lower and upper bounds, then we set $f=Ex_0$ and $b=Ax_0+\tilde b$, where each entry of $\tilde b$ is sampled uniformly in $[0.1, 0.5]$.

The results show that Algorithm~\ref{alg:goldnash}, which successfully terminated in all problem instances, performs very well compared to the other methods, especially for large problem sizes with equality constraints\footnote{The results can be reproduced via \texttt{NashOpt}'s demo file \texttt{example\_lq\_vgne\_comparison.py}}. Primal Lemke is disabled for problems with $N>50$ agents due to the excessive CPU time required; \texttt{admm} and \texttt{ipm} are disabled for problems with $N>20$ agents for the same reason. The \texttt{daqp} method loses performance on average when the size increases, especially when equality constraints are present. Primal Lemke cannot be applied to equality-constrained problems. 

\begin{table}[ht]
\setlength{\tabcolsep}{4pt}
\renewcommand{\arraystretch}{1.}
\centering
\small
\caption{Mean CPU time (ms) over 100 random vGNE LQ instances (no equalities ($q=0$)) ($n=5$ variables per agent, $m=2Nn$ coupling inequalities).}
\begin{tabular}{r|cccccc}
\hline
$N$ & \texttt{lemke} & \texttt{lemke$_{\texttt D}$} & \texttt{g*nash} & \texttt{daqp} & \texttt{admm} & \texttt{ipm} \\
\hline
  2 & 0.05 & 0.04 & 0.06 & \textbf{0.04} & 16.22 & 9.07 \\
  3 & 0.09 & 0.04 & 0.05 & \textbf{0.04} & 21.88 & 9.39 \\
  5 & 0.25 & 0.08 & 0.07 & \textbf{0.05} & 34.46 & 11.18 \\
  10 & 1.61 & 0.35 & 0.16 & \textbf{0.11} & 115.56 & 42.81 \\
  20 & 10.87 & 1.18 & 0.46 & \textbf{0.21} & 305.43 & 128.22 \\
  30 & 33.67 & 2.11 & 0.85 & \textbf{0.47} & -- & -- \\
  50 & 146.68 & 7.08 & 3.40 & \textbf{1.71} & -- & -- \\
  100 & -- & 21.58 & 10.98 & \textbf{10.21} & -- & -- \\
\hline
\end{tabular}
\label{tab:results}
\end{table}

\begin{table}[ht]
\setlength{\tabcolsep}{4pt}
\renewcommand{\arraystretch}{1.}
\centering
\small
\caption{Mean CPU time (ms) over 100 random vGNE LQ instances (equalities ($q=\lfloor N/2 \rfloor$)) ($n=5$ variables per agent, $m=2Nn$ coupling inequalities).}
\begin{tabular}{r|cccccc}
\hline
$N$ & \texttt{lemke} & \texttt{lemke$_{\texttt D}$} & \texttt{g*nash} & \texttt{daqp} & \texttt{admm} & \texttt{ipm} \\
\hline
  2 & -- & 0.07 & 0.12 & \textbf{0.05} & 26.01 & 47.49 \\
  3 & -- & 0.14 & 0.12 & \textbf{0.06} & 36.67 & 14.25 \\
  5 & -- & 0.88 & 0.24 & \textbf{0.09} & 86.53 & 19.44 \\
  10 & -- & 1.89 & 1.25 & \textbf{0.32} & 448.04 & 336.67 \\
  20 & -- & 5.48 & 2.45 & \textbf{1.52} & 1688.05 & 606.78 \\
  30 & -- & 11.86 & \textbf{2.97} & 4.32 & -- & -- \\
  50 & -- & 35.69 & \textbf{9.38} & 18.28 & -- & -- \\
  100 & -- & 227.81 & \textbf{37.86} & 134.62 & -- & -- \\
\hline
\end{tabular}
\label{tab:results_eq}
\end{table}

\subsection{Game-Theoretic MPC}

We consider an example of game-theoretic linear MPC with $N=3$ agents operating on the
discrete-time LTI plant
\begin{equation}\label{eq:mpc_dyn}
  x(t{+}1) = Ax(t) + Bu(t), \quad y(t) = Cx(t)
\end{equation}
where $x(t)\in\R^{9}$ is the state vector, $u(t)\in\R^{6}$ is the input vector, and $y(t)\in\R^{6}$ is the output vector. Each agent $i$ controls two components of the input vector, denoted as $u_i(t)\in\R^2$, and aims to regulate the output $y$ to a common set-point $r$, each using different output weight matrices, while satisfying local input constraints and shared output constraints\footnote{This example can be reproduced via \texttt{NashOpt}'s demo file
\texttt{example\_mpc\_solver\_comparison.py}}.

We adopt the linear GT-MPC setup in~\cite{Bem26}, where each agent minimizes a finite-horizon cost over its own input trajectory, subject to local input constraints and shared soft output constraints.
By letting $\Delta u(t)=u(t)-u(t{-}1)$ denote the input increments, 
each agent~$i$ minimizes over
$(\Delta u_i(0),\ldots,\Delta u_i(T{-}1),\varepsilon_i)$ the finite-horizon cost
\begin{align}\label{eq:mpc_cost}
  J_i = \sum_{k=0}^{T-1}&\bigl[(y(k{+}1)-r(t))^\T Q_y^{(i)}(y(k{+}1)-r(t))\notag\\
  &{}+ \Delta u_i(k)^\T Q_{du}^{(i)}\,\Delta u_i(k)\bigr]
  + Q_\varepsilon\varepsilon_i + Q_{\varepsilon 2}\varepsilon_i^2
\end{align}
where $r(t)\in\R^{n_y}$ is the current output set-point, subject to the dynamics~\eqref{eq:mpc_dyn}, 
the shared soft output constraints
\begin{equation}\label{eq:mpc_shared}
  y_{\min} - \textstyle\sum_{j=1}^N\varepsilon_j\,\mathbf{1} \le y(k{+}1) \leq
  y_{\max} + \textstyle\sum_{j=1}^N\varepsilon_j\,\mathbf{1}
\end{equation}
where $\varepsilon_i$ is a per-agent scalar slack, and the local 
constraints
\begin{equation}\label{eq:mpc_local}
\begin{aligned}
  &u_{\min} \leq u_i(k) \leq u_{\max},&\varepsilon_i\geq 0\\
  &\Delta u_{\min} \leq \Delta u_i(k) \leq \Delta u_{\max}
\end{aligned}
\end{equation}
for $k=0,\ldots,T{-}1$.

We condense the problem by substituting the output trajectory $y(k{+}1)$ as a linear
function of the extended state $x_e(t)=[x(t)^\T,u(t{-}1)^\T]^\T$ and the stacked
input-increment sequence $\Delta u=(\Delta u(0),\ldots,\Delta u(T{-}1))$,
so that the overall GT-MPC problem takes the form~\eqref{eq:QPi} over
the joint decision vector
$z=(\Delta u_1(0{:}T{-}1)$, $\varepsilon_1$, $\ldots$, $\Delta u_N(0{:}T{-}1)$, $\varepsilon_N)$,
with no equality constraints.
The shared output constraints~\eqref{eq:mpc_shared} become the 
inequality constraints in~\eqref{eq:QPi} that couple the agents' decision variables.

The matrices $A$, $B$, $C$ are generated randomly with a block-diagonal dominant
structure (each agent owns a two-input/two-output subsystem with three states, with input $u_i$ and
output $y_i$) plus a small
inter-agent coupling on all matrix entries $\sim\mathcal{N}(0,0.02^2)$; after generating
the matrices, the spectral radius of $A$ is scaled
to~$0.95$ and $C$ is normalized so that the DC gain equals the identity.
We use diagonal weights $Q_y^{(i)}$ with diagonal entries equal to $1$ except for those corresponding to $y_i$,  which are set to $1.5$, $Q_{du}^{(i)}=0.1I$,
$Q_\varepsilon=10^3$, $Q_{\varepsilon 2}=10^{-3}$; each entry of
$u_{\min}=-3$, $u_{\max}=3$, $\Delta u_{\min}=-2$, $\Delta u_{\max}=2$,
$y_{\min}=0$, $y_{\max}=2$, and the set-point $r(t)\equiv[1\ 2\ 1\ 2\ 1\ 2]^\T$.
The resulting game is verified numerically to be strongly monotone for each value of~$T$.

We run a closed-loop simulation from $x_0=0$, solving
the variational GT-MPC problem at each step using
\texttt{g*nash}, \texttt{lemke}, \texttt{lemke$_{\texttt{D}}$}, and \texttt{daqp}
for prediction horizons $T\in\{10,15,20,25,30\}$.
Figure~\ref{fig:mpc_trajectories} shows the resulting closed-loop output and input trajectories for $T=30$.

Figure~\ref{fig:mpc_timing} reports the minimum, median, and maximum CPU time per
closed-loop step over the simulation.
Algorithm~\ref{alg:goldnash} scales better than Lemke and dual-Lemke method as $T$ grows, 
while DR-DAQP performs extremely well in all cases. 

\begin{figure}[ht]
\centering
\includegraphics[width=\columnwidth]{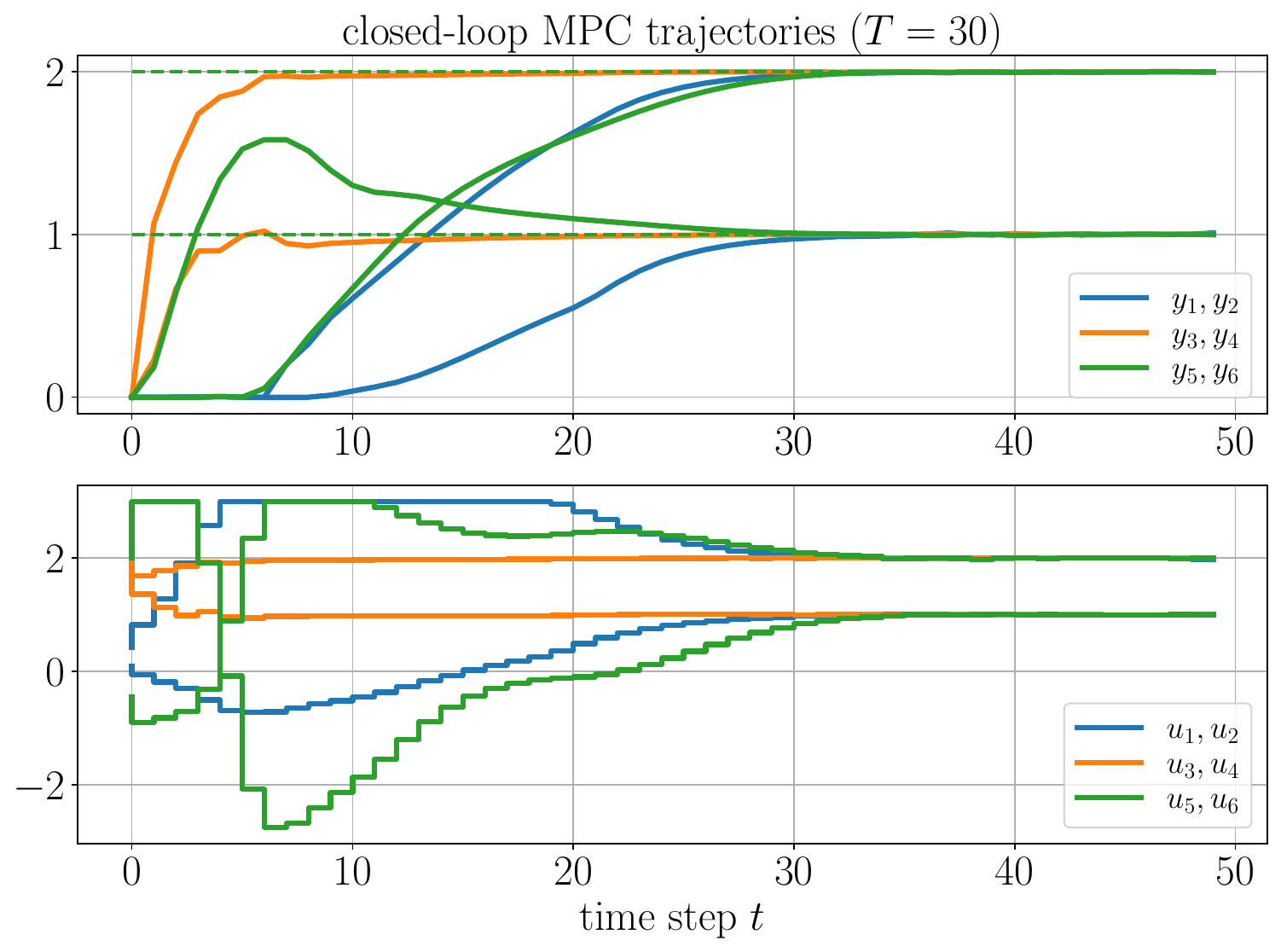}
\caption{Closed-loop output trajectories $y_1,\ldots,y_6$ (top, solid lines), output references (top, dashed lines), and input
trajectories $u_1,\ldots,u_6$ (bottom) for GT-MPC with prediction
horizon $T=30$. Colors indicate inputs (outputs) manipulated (more weighted) by each agent.}
\label{fig:mpc_trajectories}
\end{figure}

\begin{figure}[ht]
\centering
\includegraphics[width=\columnwidth]{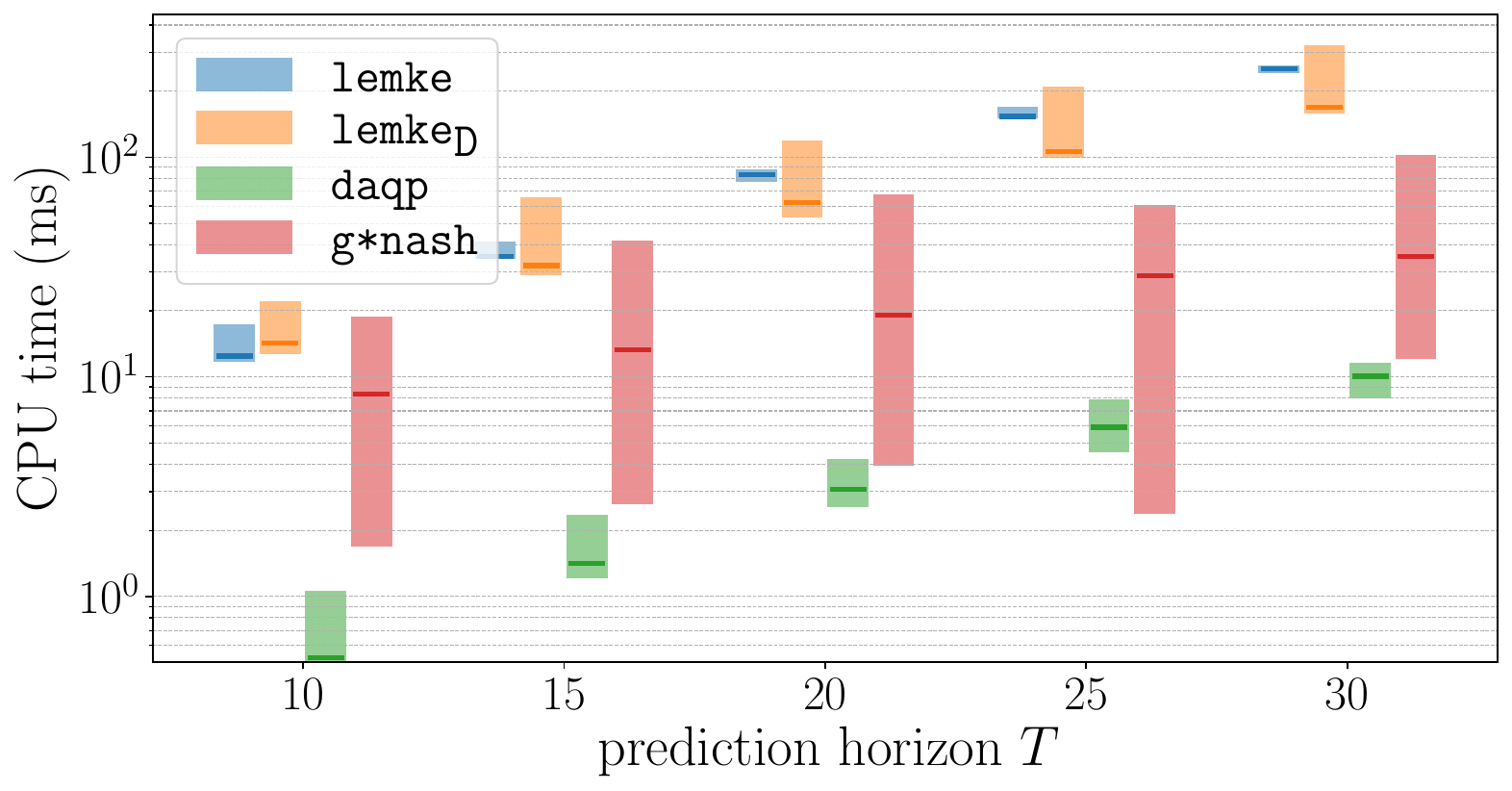}
\caption{CPU time per closed-loop step for GT-MPC ($N=3$ agents).
Bars show the min/max range, horizontal marks indicate the median.}
\label{fig:mpc_timing}
\end{figure}

\section{Conclusions}
\label{sec:conclusion}
We investigated a variant of the active-set method of Goldfarb-Idnani with the goal of computing variational
generalized Nash equilibria of strongly monotone $N$-player linear-quadratic games
with shared affine constraints. We verified that some key ingredients of the GI algorithm carry over from the symmetric QP case to the variational LQ-GNE setting without modification to the constraint add/drop logic.
Numerical experiments on random LQ games and a GT-MPC benchmark have shown
that the method can perform better than alternative solvers, especially as the problem size grows.
Further research is required to characterize conditions under which the method is guaranteed to converge to a v-GNE solution, when one exists, without the need to switch strategy upon detecting a cycle, and to possibly extend the method to non-variational equilibria.

\section*{Acknowledgments}
The author thanks Dr. Daniel Arnstr\"om for providing an updated version of the \texttt{DR-DAQP} solver and for  fruitful discussions.

\end{document}